\newtheorem{theorem}{Theorem}
\theoremstyle{plain}
\newtheorem{lemma}{Lemma}
\newtheorem{proposition}{Proposition}
\newtheorem{remark}{Remark}
\numberwithin{equation}{section}
\begin{document}
\title[Existence of nodal solutions...]{\textbf{Nodal solutions for elliptic equation involving the GJMS operators on
compact manifolds }}
\author{Mohamed Bekiri}
\address{Mohamed Bekiri, University Mustapha Stambouli; Mascara. Algeria}
\email{bekiri03@yahoo.fr}
\author{Mohammed Benalili }
\address{Mohammed Benalili Faculty Of Sciences, Mathematics Dept; University Abou-Bakr
Belka\^{\i}d; Tlemcen. Algeria}
\email{m\_benalili@mail.univ-tlemcen.dz}
\thanks{}
\date{December 1, 2017}
\subjclass[2000]{Primary 58J05}
\keywords{GJMS Operators, Critical Sobolev Exposent, Nodal Solutions}
\dedicatory{ }
\begin{abstract}
In this paper we investigate the existence of nodal solutions to elliptic
problem involving the GJMS operators on Riemannian manifold with boundary.

\end{abstract}
\maketitle

\section{Introduction and preliminaries}

Let $\left(  M,g\right)  $ be an $n$-dimensional ( $n\geq3$) compact
Riemannian manifold, with boundary and let $k\geq1$ be an integer such that
$2k\leq n.$\newline In 1992 Graham-Jenne-Mason-Sparling \cite{GJMS} have
defined a familly of conformaly invariant differential operators ( GJMS
operators for short ). The construction of these operators is based on the
ambient metric of Fefferman-Graham (\cite{Fefferman-Graham1},
\cite{Fefferman-Graham2}).\newline More preciselly, for any Riemannian metric
$g$ on $M$, there exists a differential operator $P_{g}$, such that%

\[
P_{g}:=\Delta_{g}^{k}+lot
\]
where $\Delta_{g}:=-\operatorname{div}_{g}\left(  \nabla.\right)  $ is the
Laplace-Beltrami operator and $lot$ denotes differential terms of lower order.
The GJMS operator $P_{g}$ has the following nice properties.

\begin{enumerate}
\item $P_{g}$ is self-adjoint with respect to the $L^{2}$-scalar produit.

\item $P_{g}$ is conformally invariant in the sense that: if $\tilde
{g}=\varphi^{\frac{4}{n-2k}}g,$ is a metric conformal to $g$, for all $u\in
C^{\infty}\left(  M\right)  $, we have that%
\[
P_{g}\left(  u\varphi\right)  =\varphi^{\frac{n+2k}{n-2k}}P_{\tilde{g}}u.
\]
Setting $u\equiv1,$ we obtain
\[
P_{g}\varphi=\varphi^{\frac{n+2k}{n-2k}}P_{\tilde{g}}\left(  1\right)
\text{.}%
\]
To $P_{g}$ is attached a conformal invariant scalar function denoted $Q_{g}$,
called the $Q$-curvature. When $k=1$, the GJMS operator is the conformal
Laplacian and the $Q$-curvature is the scalar curvature (up to a constant).
When $k=2$, the GJMS operator is the Paneitz operator introduced in
\cite{Paneitz} and the corresponding $Q$-curvature was defined by
Branson-\O rsted \cite{Branson-Orsted} for four dimensional manifolds and then
was generalized to higher dimension manifolds by Branson (\cite{Branson1},
\cite{Branson2}). When $2k<n,$ the $Q$-curvature is given by $Q_{g}=\frac
{2}{n-2k}P_{g}\left(  1\right)  .$
\end{enumerate}

The divergence form of The GJMS operator was obtain by Robert.

\begin{proposition}
(\cite{Robert}) Let $P_{g}$ be the conformal $GJMS$ operator, then for any
$l\epsilon\left\{  0,1,...,k-1\right\}  $ there exists $A_{\left(  l\right)
}$ a smooth \ symmetric $\left(  2l,0\right)  $-tensor on $M$ suth that%
\begin{equation}
P_{g}=\Delta_{g}^{k}+\sum_{l=0}^{k-1}\left(  -1\right)  ^{l}\nabla
^{j_{l}...j_{1}}\left(  A_{\left(  l\right)  i_{1}...i_{l}j_{1}...j_{l}}%
\nabla^{i_{1}...i_{l}}\right)  \label{eq1}%
\end{equation}
where the indices are raised via the musical isomorphism.
\end{proposition}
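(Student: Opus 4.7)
The plan is to proceed by induction on the order, leveraging the self-adjointness of $P_{g}$ to strip off the top-order term at each stage.

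\textbf{Base case.} For $k=1$, the operator is the conformal Laplacian $P_{g}=\Delta_{g}+c_{n}R_{g}$, which is already of the form (\ref{eq1}) with $A_{(0)}=c_{n}R_{g}$.

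\textbf{Reduction.} For general $k$, write $P_{g}=\Delta_{g}^{k}+L$ where $L$ has order at most $2k-1$. Since $P_{g}$ and $\Delta_{g}^{k}$ are both self-adjoint with respect to the $L^{2}$ pairing, so is $L$. On a Riemannian manifold, a real self-adjoint differential operator has even order: if its leading covariant tensor part is $T^{a_{1}\cdots a_{m}}\nabla_{a_{1}}\cdots\nabla_{a_{m}}$, then integration by parts identifies the leading term of $L^{\ast}$ with $(-1)^{m}T^{a_{1}\cdots a_{m}}\nabla_{a_{1}}\cdots\nabla_{a_{m}}$, so $L=L^{\ast}$ forces $T\equiv0$ when $m$ is odd. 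Hence $L$ has order at most $2(k-1)$.

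\textbf{Iterative extraction.} Suppose at stage $m\leq k-1$ we have obtained a self-adjoint operator $L_{m}$ of order $2m$. Its principal symbol is a real symmetric $(2m,0)$-tensor; splitting its indices as $m$ ``inner'' $i$-indices and $m$ ``outer'' $j$-indices defines a smooth symmetric $(2m,0)$-tensor $A_{(m)\,i_{1}\cdots i_{m}j_{1}\cdots j_{m}}$. A direct Leibniz expansion of
\[
(-1)^{m}\,\nabla^{j_{m}\cdots j_{1}}\!\left(A_{(m)\,i_{1}\cdots i_{m}j_{1}\cdots j_{m}}\nabla^{i_{1}\cdots i_{m}}u\right)
\]
reproduces the top-order part of $L_{m}u$, modulo curvature corrections of strictly smaller order. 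Subtracting, the remainder $L_{m-1}:=L_{m}-(-1)^{m}\nabla^{j_{m}\cdots j_{1}}(A_{(m)}\nabla^{i_{1}\cdots i_{m}}\,\cdot\,)$ is self-adjoint of order $\leq 2m-1$, hence $\leq 2(m-1)$ by the parity remark, and one iterates down to the scalar $A_{(0)}$.

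\textbf{Main obstacle.} The delicate point is the bookkeeping of the curvature terms produced when commuting the successive $\nabla^{j_{l}}$'s through $A_{(m)}$ and through the $\nabla^{i_{l}}$'s in the Leibniz expansion. One must verify that these Riemann-tensor contributions, together with the sub-principal part of $L_{m}$, assemble into a genuinely self-adjoint operator of strictly smaller even order, so the induction can continue. Equivalently, the principal symbol of $L_{m}$ must split symmetrically between inner and outer multi-indices, which is exactly what self-adjointness guarantees and what prevents any odd-order residue from appearing at an intermediate step.
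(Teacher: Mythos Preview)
The paper does not prove this proposition; it is quoted verbatim from Robert \cite{Robert} and stated without argument, so there is no ``paper's own proof'' to compare against. Your inductive strategy---peel off $\Delta_{g}^{k}$, use self-adjointness to force the residual operator to have even order, then iteratively subtract a divergence-form piece built from the principal symbol---is indeed the standard mechanism behind the cited result, and the outline is sound.

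That said, your write-up leaves the core step under-specified. When you say ``splitting its indices as $m$ inner $i$-indices and $m$ outer $j$-indices defines a smooth symmetric $(2m,0)$-tensor $A_{(m)}$,'' you are implicitly using that the principal symbol of a self-adjoint operator of order $2m$ is a \emph{fully} symmetric $2m$-tensor, so any block split inherits the required interchange symmetry $(i_{1}\dots i_{m})\leftrightarrow(j_{1}\dots j_{m})$; this should be stated, since it is precisely what makes the divergence-form operator self-adjoint and allows the induction to close. Your ``Main obstacle'' paragraph correctly flags the curvature commutators but does not actually dispatch them: the honest argument is simply that $L_{m}$ and the divergence-form operator share the same principal symbol \emph{and are both self-adjoint}, hence their difference is self-adjoint of order $\leq 2m-1$, hence of order $\leq 2(m-1)$ by the parity remark---no explicit curvature bookkeeping is needed. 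With that clarification your sketch becomes a complete proof.
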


Le $\left(  M,g\right)  $ be a smooth Riemannian compact manifold with
boundary of dimension $n.$ Let $k>1$ be an integer such that $n>2k$. The
Sobolev space $H_{k,0}^{2}\left(  M\right)  $ is defined as the completion of
$C_{c}^{\infty}\left(  M\right)  $ for the following norm (see Aubin \cite{T
Aubin})%
\[
\left\Vert u\right\Vert ^{2}:=\sum\limits_{i=0}^{k}\left\Vert \nabla
^{i}u\right\Vert _{2}%
\]
where $C_{c}^{\infty}\left(  M\right)  $ stands for the space of smooth
functions with compact supports in $M$.

This norm is equivalent to the following Hilbert norm (see \cite{Robert})%
\[
\left\Vert u\right\Vert _{H_{k}^{2}\left(  M\right)  }^{2}=\sum\limits_{i=0}%
^{k}\left\Vert \Delta^{\frac{i}{2}}u\right\Vert _{2}%
\]
where%
\[
\Delta^{\frac{i}{2}}u=\left\{
\begin{array}
[c]{ll}%
\Delta^{m}u & \text{\textit{if }}i=2m\text{ }\\
\nabla\Delta^{m}u & \text{\textit{if }}i=2m+1\text{ }%
\end{array}
\right.  \text{.}%
\]
We say that $P_{g}$ is coercive if there exists $\Lambda>0$, such that for all
$u\in H_{k.0}^{2}\left(  M\right)  $%
\[
\int\nolimits_{M}\left(  P_{g}u\right)  udv_{g}\geq\Lambda\left\Vert
u\right\Vert _{H_{k,0}^{2}\left(  M\right)  }^{2}\text{.}%
\]
We define the quantity $K_{0}\left(  n,k\right)  >0$
\[
\frac{1}{K_{0}\left(  n,k\right)  }:=\inf_{u\in C_{c}^{\infty}\left(
\mathbb{R}
^{n}\right)  -\left\{  0\right\}  }\frac{\int\limits_{%
\mathbb{R}
^{n}}\left(  \Delta^{\frac{k}{2}}u\right)  ^{2}dx}{\left(  \int\limits_{%
\mathbb{R}
^{n}}\left\vert u\right\vert ^{2^{\sharp}}dx\right)  ^{\frac{2}{2^{\sharp}}}}%
\]
as the best constant in the Euclidean Sobolev inequality $\left\Vert
u\right\Vert _{2^{\sharp}}^{2}\leq K\left\Vert \Delta^{\frac{k}{2}%
}u\right\Vert _{2}^{2}$ where $2^{\sharp}=\frac{2n}{n-2k}$ is the Sobolev
critical exponent.

We know from the work of Swanson (\cite{Swanson}) that,%
\[
\frac{1}{K_{0}\left(  n,k\right)  }=\pi^{k}\left(  \frac{\Gamma\left(
\frac{n}{2}\right)  }{\Gamma\left(  n\right)  }\right)  ^{\frac{2k}{n}}%
\prod\limits_{h=-k}^{k-1}\left(  n+2h\right)
\]
where $\Gamma$ denotes the Euler function. AMoreover $K_{0}\left(  n,k\right)
$ is attained by the radial function%
\begin{equation}
u_{0}\left(  x\right)  =\alpha_{n,k}^{\frac{n-2k}{4k}}\left(  1+\left\vert
x\right\vert ^{2}\right)  ^{-\frac{n-2k}{2}} \label{eq2}%
\end{equation}
with $\alpha_{n,k}=\prod\limits_{l=-k}^{k-1}\left(  n+2l\right)  .$

A Sobolev type inequality for the embeddings $H_{k,0}^{2}\left(  M\right)
\subset L^{2^{\sharp}}\left(  M\right)  $ proved by Mazumdar ( \cite{Mazumdar}%
) is given by the following lemma.

\begin{lemma}
\label{lem1} Let $\left(  M,g\right)  $ be a smooth $n$-dimensional compact
Riemannian manifold with boundary and let $k$ be a positive integer such that
$n>2k$. Then for any $\epsilon>0,$ there exists $B_{\epsilon}\in%
\mathbb{R}
$ such that for all $u\in H_{k,0}^{2}\left(  M\right)  $ one has
\begin{equation}
\left\Vert u\right\Vert _{2^{\sharp}}^{2}\leq\left(  K_{0}\left(  n,k\right)
+\epsilon\right)  \left\Vert \Delta_{g}^{\frac{k}{2}}u\right\Vert _{2}%
^{2}+B_{\epsilon}\left\Vert u\right\Vert _{H_{k-1}^{2}\left(  M\right)  }^{2}
\label{eq3}%
\end{equation}
where $K_{0}\left(  n,k\right)  $ is the best Euclidean constant..
\end{lemma}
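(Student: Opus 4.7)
The plan is to establish this almost-optimal Sobolev inequality by localizing via a partition of unity and invoking the sharp Euclidean Sobolev inequality on each chart. Fix $\epsilon > 0$ and introduce a small auxiliary parameter $\delta > 0$ whose value will be pinned down in terms of $\epsilon$ at the very end. The first step is to cover $M$ by finitely many coordinate charts $(\Omega_\alpha, \varphi_\alpha)_{\alpha=1}^N$ chosen so that, on each chart, the components of $g$ are $\delta$-close to the Euclidean metric in $C^k$ norm; this is possible by selecting geodesic balls of sufficiently small radius, with boundary-flattening coordinates near $\partial M$.

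Next, select a partition of unity $\{\eta_\alpha\}$ subordinate to this cover with $\eta_\alpha \in C_c^\infty(\Omega_\alpha)$, $0 \leq \eta_\alpha \leq 1$, and $\sum_\alpha \eta_\alpha^{2^\sharp} = 1$ on $M$. The power $2^\sharp$ is crucial: it yields
\[
\|u\|^{2^\sharp}_{2^\sharp} = \sum_\alpha \int_M \eta_\alpha^{2^\sharp} |u|^{2^\sharp} dv_g = \sum_\alpha \|\eta_\alpha u\|^{2^\sharp}_{2^\sharp},
\]
which combined with the subadditivity $(a_1 + \cdots + a_N)^p \leq a_1^p + \cdots + a_N^p$ valid for $0 < p \leq 1$ (applied with $p = 2/2^\sharp < 1$) gives the clean bound $\|u\|^2_{2^\sharp} \leq \sum_\alpha \|\eta_\alpha u\|^2_{2^\sharp}$. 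Since $\eta_\alpha u$ is compactly supported in $\Omega_\alpha$, pushing it forward to $\mathbb{R}^n$ and invoking the sharp Euclidean Sobolev inequality defining $K_0(n,k)$, then transferring back to $M$ with a multiplicative error $1 + O(\delta)$ coming from the closeness of $g$ to the Euclidean metric, produces
\[
\|\eta_\alpha u\|^2_{2^\sharp} \leq \bigl(K_0(n,k) + O(\delta)\bigr) \int_M \bigl(\Delta_g^{k/2}(\eta_\alpha u)\bigr)^2 dv_g.
\]

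Now writing $\Delta_g^{k/2}(\eta_\alpha u) = \eta_\alpha \Delta_g^{k/2} u + [\Delta_g^{k/2}, \eta_\alpha] u$ and noting that the commutator only involves derivatives of $u$ up to order $k-1$ contracted against derivatives of $\eta_\alpha$, the elementary inequality $(a+b)^2 \leq (1+\delta)a^2 + (1+\delta^{-1})b^2$ reduces the previous display to
\[
\|\eta_\alpha u\|^2_{2^\sharp} \leq \bigl(K_0(n,k) + O(\delta)\bigr) \int_M \eta_\alpha^2 (\Delta_g^{k/2} u)^2 dv_g + C_\delta \|u\|^2_{H^2_{k-1}(M)}.
\]
Summing over $\alpha$ then produces the desired inequality, provided the pointwise sum $\sum_\alpha \eta_\alpha^2$ can be controlled by $1 + O(\delta)$.

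The main obstacle is precisely this last control: since $\eta_\alpha \leq 1$ and $2 \leq 2^\sharp$, one has $\eta_\alpha^2 \geq \eta_\alpha^{2^\sharp}$, so $\sum_\alpha \eta_\alpha^2 \geq 1$ with excess determined by how much several $\eta_\alpha$'s are simultaneously away from $0$ and $1$. To force $\sum_\alpha \eta_\alpha^2 \leq 1 + \delta$ one must design the partition so that the overlap region between charts is thin and each $\eta_\alpha$ is essentially $0$ or $1$ outside a narrow collar; concretely, one can build them by taking $\eta_\alpha = \chi_\alpha / \bigl(\sum_\beta \chi_\beta^{2^\sharp}\bigr)^{1/2^\sharp}$ for bump functions $\chi_\alpha$ with thin collars in balls of small radius, and verify that the resulting $\sum_\alpha \eta_\alpha^2$ can be made arbitrarily close to $1$ as the radii shrink. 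Once this is achieved, a final choice $\delta = \delta(\epsilon)$ absorbs all multiplicative errors into the leading constant $K_0(n,k) + \epsilon$, while the additive contributions are collected into $B_\epsilon \|u\|^2_{H^2_{k-1}(M)}$.
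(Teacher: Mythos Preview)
The paper does not supply its own proof of this lemma: it simply attributes the result to Mazumdar and states it. So there is no argument in the paper to compare against; what follows is an assessment of your proposal on its own terms.

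Your overall strategy---localize via a partition of unity, invoke the sharp Euclidean inequality on each chart, and absorb commutator terms into the lower-order remainder---is exactly the standard one, and every step up to and including the commutator estimate is fine. The gap is in the paragraph you yourself flag as the ``main obstacle.'' With the normalization $\sum_\alpha \eta_\alpha^{2^\sharp}=1$, the pointwise excess $\sum_\alpha \eta_\alpha^2 - 1$ cannot be made small by shrinking collars: wherever exactly two cut-offs overlap with equal values, the constraint forces $\eta_1=\eta_2=2^{-1/2^\sharp}$ and hence $\eta_1^2+\eta_2^2=2^{1-2/2^\sharp}$, a fixed constant strictly larger than $1$. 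Thin collars only shrink the \emph{measure} of the overlap set, but you need the \emph{pointwise} bound $\sum_\alpha \eta_\alpha^2\le 1+\delta$ to control $\sum_\alpha\int\eta_\alpha^2(\Delta_g^{k/2}u)^2\,dv_g$ uniformly in $u$, and a test function may concentrate entirely in the overlap region.

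The fix is to change the normalization: take $\sum_\alpha \eta_\alpha^{2}=1$ instead. Then $\sum_\alpha\int\eta_\alpha^2(\Delta_g^{k/2}u)^2\,dv_g=\|\Delta_g^{k/2}u\|_2^2$ exactly, and the decomposition of the left-hand side is recovered by a H\"older trick: writing
\[
\int_M|u|^{2^\sharp}\,dv_g=\sum_\alpha\int_M|u|^{2^\sharp-2}(\eta_\alpha u)^2\,dv_g
\le \|u\|_{2^\sharp}^{2^\sharp-2}\sum_\alpha\|\eta_\alpha u\|_{2^\sharp}^2
\]
gives $\|u\|_{2^\sharp}^2\le\sum_\alpha\|\eta_\alpha u\|_{2^\sharp}^2$ without any subadditivity issue. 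With this single change your argument goes through and matches the proof in Mazumdar's paper that the authors cite.
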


In this paper we investigate the existence of a real positive number $\lambda$
and nodal solutions of the following polyharmonic Dirichlet problem%
\begin{equation}
\left\{
\begin{array}
[c]{ll}%
P_{g}u=\lambda f\left\vert u\right\vert ^{2^{\sharp}-2}u & \text{in }M\\
u=\phi_{1}\text{, }\frac{\partial u}{\partial\nu}=\phi_{2}\text{,..., and
}\frac{\partial^{k-1}u}{\partial\nu^{k-1}}=\phi_{k} & \text{on }\partial M
\end{array}
\right.  \label{eq4}%
\end{equation}
where $\left(  M,g\right)  $ is a compact Riemannian manifold of dimension
$n>2k$, with boundary $\partial M$, $P_{g}$ is the GJMS operator defined in
($\ref{eq1}$), $f\in C^{\infty}\left(  M\right)  $, $2^{\sharp}=\frac
{2n}{n-2k}$ denotes the critical Sobolev exponent,$\ \phi_{1},\phi
_{2},...,\phi_{k}\in C^{\infty}\left(  \partial M\right)  $ are boundary data
functions and $\nu$ is the unit outward normal vector field to $\partial M$.

When $\phi_{1}$ is of changing sign, $u$ is called a nodal solution of the
equation ($\ref{eq4}$).

The existence of solutions for polyharmonic operators with critical exponent
has been studied in many papers. We recall briefly some of them.

In \cite{Pucci-Serrin} Pucci-Serrin considered the following problem%
\[
\left\{
\begin{array}
[c]{l}%
\left(  -\Delta\right)  ^{k}u=\left\vert u\right\vert ^{k_{\ast}-2}u+\lambda
u\\
u\in H_{0}^{k}\left(  \Omega\right)  \text{, }\lambda>0
\end{array}
\right.
\]
where $k_{\ast}=\frac{2k}{n-2k}$ and $\Omega$ is a bounded starchaped domain
in $%
\mathbb{R}
^{n}$.

They proved that this problem has only the trivial solution $u\equiv0$ if
$\lambda<0$ and $\Omega$ is starshaped domain.

In \cite{H Grunau} Grunau has been intersted by the existence of positive
solutions to the semilinear polyharmonic Dirichlet%
\[
\left\{
\begin{array}
[c]{ll}%
\left(  -\Delta\right)  ^{k}u=\lambda u+u^{s-1} & \text{in }B\\
D^{\alpha}u=0,\text{ }\left\vert \alpha\right\vert \leq k-1 & \text{on
}\partial B
\end{array}
\right.
\]
where, $k\in%
\mathbb{N}
$, $B$ is the unit open ball in $%
\mathbb{R}
^{n}$, $n>2k$, $\lambda\in%
\mathbb{R}
$ and $s=\frac{2n}{n-2k}$ is the critical Sobolev exponent. In \cite{Yuxin
Ge1} Ge has obtained existence of positive\ solutions for semilinear
polyharmonic problem with Navier boundary conditions. In \cite{Yuxin Ge}
Ge-Wei-Zhou have proved the existence of solutions for the following
polyharmonic problem with perturbation%
\[
\left\{
\begin{array}
[c]{ll}%
\left(  -\Delta\right)  ^{k}u=\left\vert u\right\vert ^{s-2}u+f\left(
x,u\right)  & \text{in }\Omega\\
u=Du=...=D^{k-1}u=0 & \text{on }\partial\Omega
\end{array}
\right.
\]
where $\Omega$ is a smooth bounded domain in $%
\mathbb{R}
^{n}$, $s=\frac{2n}{n-2k}$ is the critical Sobolev exponent and $f\left(
x,u\right)  $ is a lower-order perturbation of $\left\vert u\right\vert
^{s-2}u$. In \cite{Saikat Mazumdar} \ Mazumdar obtained by Coron's topological
method, solutions to a nonlinear elliptic problem involving critical Sobolev
exponent for a polyharmonic operator on a Riemannian manifold. For more
details concerning higher order problems, we refer the reader to the general
monograph Gazzola-Grunau-Sweers \cite{Gazzola-Grunau-Sweers}.

In this paper, we extend the results obtained by the autors in \cite{Bekiri}
for equations containing Paneitz-Bran\c{c}on operator to the equations
involving the GJMS operator.

Our main results in the present work state as follows.

\begin{theorem}
\label{th1} Let $\left(  M,g\right)  $ be a compact Riemannian manifold of
dimension $n>2k$ with boundary $\partial M\neq0$. Let $A_{\left(  l\right)  }$
be a smooth symetric $\left(  2l,0\right)  $ tensor on $M$ for any
$l\in\left\{  0,1,...,k-1\right\}  $. Let $f\in C^{\infty}\left(  M\right)  $,
$f>0$ and $x_{0}$ a point in the interior of $M$ such that $f\left(
x_{0}\right)  =\max_{M}f$. We assume that the operator $P_{g}$ is coercive and
the following condition is satisfied
\[
\inf_{u\in H_{k}^{2}\left(  M\right)  -\left\{  0\right\}  }\frac{%
{\displaystyle\int_{M}}
\left(  \Delta_{g}^{\frac{k}{2}}u\right)  ^{2}dv_{g}+\sum\limits_{l=0}^{k-1}%
{\displaystyle\int_{M}}
A_{l}\left(  \nabla^{l}u,\nabla^{l}u\right)  dv_{g}}{\left(
{\displaystyle\int_{M}}
f\left\vert u\right\vert ^{2^{\sharp}}dv_{g}\right)  ^{\frac{2}{2^{\sharp}}}%
}<\frac{1}{\left(  f\left(  x_{0}\right)  \right)  ^{\frac{2}{2^{\sharp}}%
}K_{0}\left(  n,k\right)  }.
\]
\newline Then there exist a positive real number $\lambda$ and a non trivial
solution $u=w+h\in H_{k}^{2}\left(  M\right)  \cap C^{2k}\left(  M\right)  $
of the equation $\left(  \ref{eq4}\right)  $, where $w$ is a minimizer of the
functional $I$ defined on $H_{k,0}^{2}\left(  M\right)  $ by
\[
I\left(  w\right)  =\int_{M}\left(  \Delta_{g}^{\frac{k}{2}}w\right)
^{2}dv_{g}+\sum\limits_{l=0}^{k-1}\int_{M}A_{l}\left(  \nabla^{l}w,\nabla
^{l}w\right)  dv_{g}%
\]
under the constraint $\int_{M}f\left\vert w+h\right\vert ^{2^{\sharp}}%
dv_{g}=\gamma$, $h$ denotes the unique solution of the problem%
\[
\left\{
\begin{array}
[c]{ll}%
P_{g}h=0 & \text{in }M\\
h=\phi_{1}\text{, }\frac{\partial h}{\partial\nu}=\phi_{2},...\text{and }%
\frac{\partial^{k-1}h}{\partial\nu^{k-1}}=\phi_{k} & \text{on }\partial M
\end{array}
\right.
\]
and $\phi_{1}$, $\phi_{2},...,\phi_{k}$ are smooth functions on the boundary
$\partial M$ with $\phi_{1}$ of changing sign.
\end{theorem}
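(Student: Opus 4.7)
The plan is to realize $u$ as $w+h$, where $h$ absorbs the non-homogeneous boundary data and $w\in H_{k,0}^{2}(M)$ minimizes $I$ under the specified constraint, so that the Euler--Lagrange equation produces both $\lambda$ and the solution. First I would invoke coerciveness and self-adjointness of $P_{g}$, together with the classical Lax--Milgram-type solvability for the polyharmonic Dirichlet problem, to obtain a unique $h\in C^{2k}(M)$ solving $P_{g}h=0$ in $M$ with traces $\phi_{1},\ldots,\phi_{k}$. Writing $u=w+h$ recasts $(\ref{eq4})$ as the problem of finding $w\in H_{k,0}^{2}(M)$ with $P_{g}w=\lambda f|w+h|^{2^{\sharp}-2}(w+h)$. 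Fixing a positive level $\gamma$, set
\[
\mu=\inf\left\{I(w):w\in H_{k,0}^{2}(M),\ \int_{M}f|w+h|^{2^{\sharp}}dv_{g}=\gamma\right\}.
\]
Integration by parts in the decomposition $(\ref{eq1})$ identifies $I(w)$ with $\int_{M}(P_{g}w)w\,dv_{g}$ on $H_{k,0}^{2}(M)$, and coerciveness then gives $I(w)\geq\Lambda\|w\|_{H_{k,0}^{2}}^{2}$, so $\mu\in\mathbb{R}$ and any minimizing sequence is bounded.

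Next, I would extract a weak limit $w_{m}\rightharpoonup w$ in $H_{k,0}^{2}(M)$ with $w_{m}\to w$ strongly in $H_{k-1}^{2}(M)$ and in $L^{q}(M)$ for every $q<2^{\sharp}$, and a.e. Setting $\theta_{m}=w_{m}-w$, the Brezis--Lieb lemma yields
\[
\int_{M}f|w_{m}+h|^{2^{\sharp}}dv_{g}=\int_{M}f|w+h|^{2^{\sharp}}dv_{g}+\int_{M}f|\theta_{m}|^{2^{\sharp}}dv_{g}+o(1),
\]
while the quadratic structure of $I$, together with strong $H_{k-1}^{2}$-convergence killing the $A_{l}$ cross terms, gives $I(w_{m})=I(w)+\int_{M}(\Delta_{g}^{k/2}\theta_{m})^{2}dv_{g}+o(1)$. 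Applying Lemma \ref{lem1} with small $\epsilon>0$ to $\theta_{m}$ and using $f\leq f(x_{0})$, one relates the three quantities and obtains a lower bound for $\mu$ saturated only at the value $(f(x_{0})^{2/2^{\sharp}}K_{0}(n,k))^{-1}\gamma^{2/2^{\sharp}}$. The strict hypothesis, which I would check by plugging a rescaled, cut-off Euclidean extremal $u_{0}$ from $(\ref{eq2})$ centred at $x_{0}$ into the quotient, lies strictly below this threshold, forcing $\|\theta_{m}\|_{2^{\sharp}}\to 0$. Strong convergence follows, $w$ is admissible, and $w$ realizes $\mu$.

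Finally, Lagrange multipliers produce $\lambda\in\mathbb{R}$ with $P_{g}w=\lambda f|w+h|^{2^{\sharp}-2}(w+h)$ weakly; since $P_{g}h=0$, the function $u=w+h$ solves $(\ref{eq4})$. Testing against $w+h$, using $I(w)>0$ on nontrivial minimizers and the sign of the constraint, yields $\lambda>0$. Elliptic bootstrap for GJMS-type operators, as in Robert's work, then upgrades $u$ from $H_{k}^{2}(M)$ to $C^{2k}(M)$, and $u$ is nodal because its boundary trace $\phi_{1}$ changes sign. The main obstacle is the compactness analysis: the embedding $H_{k,0}^{2}(M)\hookrightarrow L^{2^{\sharp}}(M)$ is not compact, so a minimizing sequence could a priori concentrate at a point and lose $L^{2^{\sharp}}$-mass; the strict inequality, expressed through the sharp Euclidean constant $K_{0}(n,k)$ of Lemma \ref{lem1} and the test-function computation at $x_{0}$, is precisely what precludes this and makes the minimum attained.
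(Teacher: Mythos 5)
Your strategy is genuinely different from the paper's: you minimize directly at the critical exponent and try to recover compactness via Brezis--Lieb, whereas the paper first solves the subcritical problems $P_{g}w=\lambda_{\gamma,q}f\left\vert w+h\right\vert ^{q-2}\left(  w+h\right)$ for $q<2^{\sharp}$ (where the embedding is compact and a minimizer exists by weak lower semicontinuity), and then lets $q\rightarrow2^{\sharp}$, obtaining a weak solution as a weak limit without ever proving strong convergence of a critical minimizing sequence. The problem is that your key step --- ``one relates the three quantities and obtains a lower bound for $\mu$ saturated only at $\gamma^{2/2^{\sharp}}/(f(x_{0})^{2/2^{\sharp}}K_{0})$'' --- is exactly where the proof lives, and as sketched it does not close when $h\neq0$. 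The standard way to exclude partial concentration is strict subadditivity: writing $a=\int_{M}f\left\vert w+h\right\vert ^{2^{\sharp}}dv_{g}$ and $b=\lim_{m}\int_{M}f\left\vert \theta_{m}\right\vert ^{2^{\sharp}}dv_{g}$ with $a+b=\gamma$, one needs $I(w)\geq\mu_{a}$ together with $\mu_{a}+\beta b^{2/2^{\sharp}}>\mu_{\gamma}$, where $\beta=(f(x_{0})^{2/2^{\sharp}}K_{0})^{-1}$ and $\mu_{a}$ is the infimum at constraint level $a$. When $h=0$ this follows from the homogeneity $\mu_{a}=\mu_{1}a^{2/2^{\sharp}}$ and the subadditivity of $t\mapsto t^{2/2^{\sharp}}$; when $h\neq0$ neither $I$ nor the constraint is homogeneous in $w$, the map $a\mapsto\mu_{a}$ has no scaling law, and the argument breaks down. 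Relatedly, the hypothesis of the theorem is an infimum of a quotient that does not involve $h$ at all, while the threshold inequality your compactness argument needs, $\mu<\beta\gamma^{2/2^{\sharp}}$, involves $h$ through both $\mu$ and $\gamma$; you never connect the two. The paper sidesteps both issues: when the boundary data are nonzero, $u=w+h$ is nontrivial for free, and the concentration analysis (Proposition \ref{prop5}) is invoked only when $h=0$, where the hypothesis and the threshold condition coincide by homogeneity.

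Two smaller points. First, the strict inequality in the statement is a hypothesis, not something you must verify; the test-function computation of Section 4 only shows that the infimum never exceeds the threshold, so plugging in the cut-off extremal $u_{\epsilon}$ would not establish strictness in any case. Second, you cannot test the Euler--Lagrange equation against $w+h$, since $h\notin H_{k,0}^{2}(M)$ and the weak formulation only holds against test functions with vanishing traces up to order $k-1$; the positivity of $\lambda$ is instead obtained by testing against $w$ and estimating $\int_{M}f\left\vert w+h\right\vert ^{2^{\sharp}-2}\left(  w+h\right)  h\,dv_{g}<\gamma$ via H\"{o}lder together with $\int_{M}f\left\vert h\right\vert ^{2^{\sharp}}dv_{g}<\gamma$, as the paper does.
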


\begin{remark}
Since the function $\phi_{1}$ is of changing sign, the solutions obtained in
Theorem \ref{th1} are nodal.
\end{remark}

This paper is structured as follows. In section 2, we use the variational
method to construct a minimizing solution $w_{\gamma,q}$ to the subcritical
equations of type $\left(  \text{\ref{eq7}}\right)  $, while in section 3, we
prove that under some additional conditions the minimizing sequence $\left(
w_{\gamma,q}\right)  _{q}$ converges strongly to a non trivial solution of the
critical equation $\left(  \ref{eq6}\right)  $ when $q$ tends to the critical
Sobolev exponent $2^{\sharp}$. Finally, the last section is devoted to test
functions computations. These computations have their analogue in
\cite{Bekiri} when dealing with the Paneitz-Bran\c{c}on operator.

\section{Construction of Subcritical Solutions}

First, we extend uniquely the boundary data $\phi_{1},$ $\phi_{2},...,$
$\phi_{k}$ on the whole $M$.

\begin{lemma}
Let $\left(  M,g\right)  $ be a smooth Riemannian compact manifold with smooth
boundary and of dimension $n>2k.$ We assume that the operator $P_{g}$ defined
in $\left(  \ref{eq1}\right)  $ is coercive. Then there exists a unique $h\in
C^{2k,\alpha}\left(  M\right)  $, for some $\alpha\in\left(  0,1\right)
$,solution of the following problem
\begin{equation}
\left\{
\begin{array}
[c]{ll}%
P_{g}h=0 & \text{in }M\\
h=\phi_{1}\text{, }\frac{\partial h}{\partial\nu}=\phi_{2},...\text{and }%
\frac{\partial^{k-1}h}{\partial\nu^{k-1}}=\phi_{k} & \text{on }\partial M
\end{array}
\right.  \text{.} \label{eq5}%
\end{equation}

\end{lemma}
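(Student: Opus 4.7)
The plan is to reduce the inhomogeneous boundary value problem (\ref{eq5}) to a homogeneous Dirichlet problem to which coercivity and the Lax--Milgram theorem can be applied, and then to obtain the stated regularity by standard elliptic theory for the $2k$-th order operator $P_{g}$.

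\textbf{Step 1: Extension of the boundary data.} First I would build a smooth extension $\tilde h\in C^{\infty}(M)$ (or at least $C^{2k,\alpha}(M)$) satisfying $\tilde h=\phi_{1}$, $\partial_{\nu}\tilde h=\phi_{2}$, \dots, $\partial^{k-1}_{\nu}\tilde h=\phi_{k}$ on $\partial M$. This is standard: in a tubular neighbourhood of $\partial M$ parametrised by $(x',t)$ with $t=d(\cdot,\partial M)$, one sets
\[
\tilde h(x',t)=\sum_{j=0}^{k-1}\frac{(-t)^{j}}{j!}\,\phi_{j+1}(x')\,\chi(t),
\]
where $\chi$ is a smooth cut-off equal to $1$ near $t=0$, and extends by $0$ elsewhere. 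Since $\phi_{1},\dots,\phi_{k}\in C^{\infty}(\partial M)$, we get $\tilde h\in C^{\infty}(M)$ with the prescribed Dirichlet data.

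\textbf{Step 2: Reduction to homogeneous Dirichlet data.} Seek a solution of the form $h=\tilde h+v$ with $v\in H_{k,0}^{2}(M)$; the problem (\ref{eq5}) is equivalent to
\[
P_{g}v=-P_{g}\tilde h\text{ in }M,\qquad v,\partial_{\nu}v,\dots,\partial^{k-1}_{\nu}v=0\text{ on }\partial M.
\]
On $H_{k,0}^{2}(M)$ the bilinear form $a(v,\varphi)=\int_{M}(P_{g}v)\varphi\,dv_{g}$ is symmetric (since $P_{g}$ is self-adjoint) and continuous by the divergence expression (\ref{eq1}), which after integration by parts on $H_{k,0}^{2}(M)$ pairs at most $k$ derivatives of $v$ with $k$ derivatives of $\varphi$. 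By the coercivity hypothesis, $a(v,v)\geq\Lambda\|v\|_{H_{k,0}^{2}(M)}^{2}$. The linear form $\varphi\mapsto -\int_{M}(P_{g}\tilde h)\varphi\,dv_{g}$ is continuous on $H_{k,0}^{2}(M)$ because $P_{g}\tilde h\in L^{2}(M)$ (in fact smooth). Lax--Milgram therefore yields a unique weak solution $v\in H_{k,0}^{2}(M)$.

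\textbf{Step 3: Regularity and uniqueness.} Since $P_{g}$ is an elliptic operator of order $2k$ with smooth coefficients, and since $P_{g}v=-P_{g}\tilde h\in C^{\infty}(M)$ in the interior while the Dirichlet data on $\partial M$ are smooth, standard boundary regularity for $2k$-th order elliptic Dirichlet problems (see e.g.\ Gazzola--Grunau--Sweers \cite{Gazzola-Grunau-Sweers}) upgrades $v$ to $C^{2k,\alpha}(M)$ for some $\alpha\in(0,1)$, hence $h=\tilde h+v\in C^{2k,\alpha}(M)$. Uniqueness follows from coercivity: if $h_{1},h_{2}$ are two solutions, then $w=h_{1}-h_{2}\in H_{k,0}^{2}(M)$ satisfies $P_{g}w=0$, so $0=a(w,w)\geq\Lambda\|w\|_{H_{k,0}^{2}(M)}^{2}$ forces $w\equiv0$.

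The only mildly delicate point is Step 1 together with the regularity invocation in Step 3: one must be careful that the extension $\tilde h$ is regular enough for $P_{g}\tilde h$ to make sense as an $L^{2}$ (or better) datum, and that the elliptic boundary theory for $P_{g}$ applies with non-trivial higher-order normal-derivative data. Both are standard once the coefficients $A_{(l)}$ and the boundary are smooth, so no genuine analytic obstacle appears.
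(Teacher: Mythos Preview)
Your proof is correct and follows exactly the approach the paper indicates: the paper's own proof simply states that existence and uniqueness are classical consequences of the Lax--Milgram theorem and that regularity follows from general elliptic theory, without spelling out the extension of the boundary data or the reduction to homogeneous Dirichlet conditions. Your Steps~1--3 are a faithful and more detailed execution of precisely that strategy.
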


\begin{proof}
The proof is classical, the existence and the uniqueness follows from the
Lax-Milgram's theorem. Moreover the regularity of the solution follows from
general regularity theory.

Let $u=w+h$, the equation $\left(  \text{\ref{eq4}}\right)  $ will be written
as%
\begin{equation}
\left\{
\begin{array}
[c]{ll}%
P_{g}w=\lambda f\left\vert w+h\right\vert ^{2^{\sharp}-2}\left(  w+h\right)
& \text{in }M\\
w=\frac{\partial w}{\partial\nu}=...=\frac{\partial^{k-1}w}{\partial\nu^{k-1}%
}=0 & \text{on }\partial M
\end{array}
\right.  .\label{eq6}%
\end{equation}
Due to the lack of compactness of the embedding $H_{k,0}^{2}\left(  M\right)
\hookrightarrow L^{2^{\sharp}}\left(  M\right)  $, it is standard to use the
subcritical method. Given $q\in\left(  2,2^{\sharp}\right)  $, we consider the
subcritical problem%
\begin{equation}
\left\{
\begin{array}
[c]{ll}%
P_{g}w=\lambda f\left\vert w+h\right\vert ^{q-2}\left(  w+h\right)   &
\text{in }M\\
w=\frac{\partial w}{\partial\nu}=...=\frac{\partial^{k-1}w}{\partial\nu^{k-1}%
}=0 & \text{on }\partial M
\end{array}
\right.  \text{.}\label{eq7}%
\end{equation}
We define the functional $I$ on $H_{k,0}^{2}\left(  M\right)  $ by%
\[
I\left(  w\right)  =\int\nolimits_{M}wP_{g}wdv_{g}=\int_{M}\left(  \Delta
_{g}^{\frac{k}{2}}w\right)  ^{2}dv_{g}+\sum\limits_{l=0}^{k-1}\int_{M}%
A_{l}\left(  \nabla^{l}w,\nabla^{l}w\right)  dv_{g}\text{.}%
\]
Denote by%
\[%
\begin{array}
[c]{lll}%
\mu_{\gamma,q}:=\inf_{w\in\mathcal{H}_{q}}I\left(  w\right)   & \text{and} &
\mathcal{H}_{q}=\left\{
\begin{array}
[c]{cc}%
w\in H_{k,0}^{2}\left(  M\right)  \text{ such that} & \int\nolimits_{M}%
f\left\vert w+h\right\vert ^{q}dv_{g}=\gamma
\end{array}
\right\}
\end{array}
\]
where $\gamma$ is a constant such that
\begin{equation}
\int\nolimits_{M}f\left\vert h\right\vert ^{2^{\sharp}}dv_{g}<\gamma
\text{.}\label{eq8}%
\end{equation}

\end{proof}

Firstly, we show that the set $\mathcal{H}_{q}$ is not empty.

\begin{lemma}
Under the condition $\int\nolimits_{M}f\left\vert h\right\vert ^{2^{\sharp}%
}dv_{g}<\gamma$, the set $\mathcal{H}_{q}$ is not empty.
\end{lemma}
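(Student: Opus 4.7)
The plan is to exhibit, via the intermediate value theorem, a specific $w \in H_{k,0}^{2}(M)$ satisfying $\int_{M} f\,|w+h|^{q}\,dv_{g} = \gamma$. The strategy has two steps: first, produce a candidate $w_{0}$ whose functional value lies strictly below $\gamma$; second, scale up along a one-parameter family to push the value above $\gamma$, and invoke continuity of the constraint.

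For the first step, I would build $w_{0} \in H_{k,0}^{2}(M)$ for which $\int_{M} f\,|w_{0}+h|^{q}\,dv_{g}$ is as small as desired. Let $\chi_{\eta} \colon M \to [0,1]$ be a smooth cutoff that vanishes on an $\eta$-neighborhood of $\partial M$ and equals $1$ outside a slightly larger collar, and set $w_{0} := -\chi_{\eta}\,h$. Since $h \in C^{2k,\alpha}(M)$ by the previous lemma and $w_{0}$ is identically zero near $\partial M$, $w_{0}$ has compact support in the interior of $M$, so $w_{0} \in H_{k,0}^{2}(M)$. Moreover $w_{0}+h = (1-\chi_{\eta})\,h$ is supported in a shrinking collar of $\partial M$, and boundedness of $f$ and $h$ together with dominated convergence gives $\int_{M} f\,|w_{0}+h|^{q}\,dv_{g} \to 0$ as $\eta \to 0$. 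Taking $\eta$ sufficiently small yields $\int_{M} f\,|w_{0}+h|^{q}\,dv_{g} < \gamma$.

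For the second step, fix any $\psi \in C_{c}^{\infty}(M) \subset H_{k,0}^{2}(M)$ with $\psi \not\equiv 0$ and consider the continuous map
\[
\phi(t) := \int_{M} f\,|w_{0} + t\psi + h|^{q}\,dv_{g}, \qquad t \in [0,\infty).
\]
The previous step gives $\phi(0) < \gamma$, while the triangle inequality in $L^{q}(f\,dv_{g})$ yields
\[
\|w_{0} + t\psi + h\|_{L^{q}(f\,dv_{g})} \;\geq\; t\,\|\psi\|_{L^{q}(f\,dv_{g})} \;-\; \|w_{0}+h\|_{L^{q}(f\,dv_{g})},
\]
so $\phi(t) \to +\infty$ as $t \to +\infty$. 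The intermediate value theorem then produces $t^{*} > 0$ with $\phi(t^{*}) = \gamma$, and $w_{0} + t^{*}\psi \in \mathcal{H}_{q}$.

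I do not anticipate any serious obstacle: the argument is entirely soft, requiring only the regularity of $h$ supplied by the preceding lemma, continuity of the $L^{q}$ integral, and the intermediate value theorem. In fact, the hypothesis $\int_{M} f\,|h|^{2^{\sharp}}\,dv_{g} < \gamma$ is stronger than needed for mere non-emptiness --- any $\gamma > 0$ would suffice --- but it will be indispensable further along, namely for controlling the infimum $\mu_{\gamma,q}$ and ensuring that the minimizer passes to a nontrivial solution in the limit $q \to 2^{\sharp}$.
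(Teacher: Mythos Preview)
Your argument is correct, but it takes a different and somewhat longer path than the paper's. The paper simply sets $F(t)=\int_{M}f\lvert t\psi_{1}+h\rvert^{q}\,dv_{g}$, where $\psi_{1}$ is the first Dirichlet eigenfunction of $\Delta_{g}^{k}$, observes that the hypothesis $\int_{M}f\lvert h\rvert^{2^{\sharp}}dv_{g}<\gamma$ gives $F(0)=\int_{M}f\lvert h\rvert^{q}dv_{g}<\gamma$ for $q$ near $2^{\sharp}$, notes $F(t)\to+\infty$, and applies the intermediate value theorem to obtain $t_{\gamma,q}\psi_{1}\in\mathcal{H}_{q}$. In other words, the paper takes $w_{0}=0$ and uses the stated hypothesis directly, whereas you manufacture a $w_{0}=-\chi_{\eta}h$ to force the starting value below $\gamma$ without invoking the hypothesis at all. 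Your route buys the generality you note at the end (any $\gamma>0$ works), but the paper's choice is not arbitrary: the specific element $t_{\gamma,q}\psi_{1}$ and the fact that $t_{\gamma,q}$ depends continuously on $q$ (via the implicit function theorem, using $\partial_{t}F(t_{\gamma,q},q)\neq 0$) are reused in the next proposition to bound $\mu_{\gamma,q}\leq I(t_{\gamma,q}\psi_{1})\leq C(\gamma)$ uniformly in $q$. So the paper's simpler construction feeds directly into the subsequent compactness argument, while yours would need to be revisited there.
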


\begin{proof}
To do this, we set
\[
F\left(  t\right)  =\int\nolimits_{M}f\left\vert t\psi_{1}+h\right\vert
^{q}dv_{g}%
\]
where $\psi_{1}$ is the eigenfunction corresponding to the first eigenvalue
$\lambda_{1}$ of $\Delta_{g}^{k}$%
\[
\left\{
\begin{array}
[c]{ll}%
\Delta_{g}^{k}\psi_{1}=\lambda_{1}\psi_{1} & \text{in }M\\
\psi_{1}=\frac{\partial}{\partial\nu}\psi_{1}=...\frac{\partial^{k-1}%
}{\partial\nu^{k-1}}\psi_{1}=0 & \text{on }\partial M
\end{array}
\right.  \text{.}%
\]
The function $F$ is continuous, for $q$ close to $2^{\sharp}$, we have%
\[%
\begin{array}
[c]{lll}%
F\left(  0\right)  =\int\nolimits_{M}f\left\vert h\right\vert ^{q}%
dv_{g}<\gamma & \text{and} & \lim\limits_{t\rightarrow+\infty}F\left(
t\right)  =+\infty
\end{array}
\text{.}%
\]
So, there exists $t_{\gamma,q}>0$ such that
\[
F\left(  t_{\gamma,q}\right)  =\int\nolimits_{M}f\left\vert t_{\gamma,q}%
\psi_{1}+h\right\vert ^{q}dv_{g}=\gamma
\]
Consequently $t_{\gamma,q}\psi_{1}\in\mathcal{H}_{q}\neq\emptyset$.
\end{proof}

Now we prove that the minimum $\mu_{\gamma,q}$ is achieved by a smooth
function in $\mathcal{H}_{q}$.

\begin{proposition}
\label{prop3} Let $q\in\left(  2,2^{\sharp}\right)  $. If $P_{g}$ is coercive
and $\gamma>\int\nolimits_{M}f\left\vert h\right\vert ^{2^{\sharp}}dv_{g}$,
there exist a real number $\lambda_{\gamma,q}$ and a smooth function
$w_{\gamma,q}\in\mathcal{H}_{q}$ solution of the problem $\left(
\text{\ref{eq7}}\right)  $.
\end{proposition}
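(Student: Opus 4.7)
The plan is to use the direct method of the calculus of variations to produce a minimizer of $I$ on $\mathcal{H}_{q}$, and then recover a solution of $(\ref{eq7})$ via a Lagrange multiplier argument followed by elliptic regularity.

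First I would establish the boundedness from below of $I$ and the weak compactness of minimizing sequences. Since $P_{g}$ is coercive, one has $I(w) \geq \Lambda\,\|w\|_{H_{k,0}^{2}(M)}^{2} \geq 0$ for every $w\in H_{k,0}^{2}(M)$, so $\mu_{\gamma,q}\geq 0$ is well defined. Taking a minimizing sequence $(w_{m})\subset \mathcal{H}_{q}$, coercivity forces $(w_{m})$ to be bounded in $H_{k,0}^{2}(M)$; up to extraction, $w_{m}\rightharpoonup w_{\gamma,q}$ weakly in $H_{k,0}^{2}(M)$. Because $q<2^{\sharp}$, the Rellich–Kondrakov-type embedding $H_{k,0}^{2}(M)\hookrightarrow L^{q}(M)$ is compact, so $w_{m}\to w_{\gamma,q}$ strongly in $L^{q}(M)$, which together with $f\in C^{\infty}(M)$ and the continuity of $t\mapsto|t|^{q}$ gives $\int_{M}f|w_{m}+h|^{q}dv_{g}\to \int_{M}f|w_{\gamma,q}+h|^{q}dv_{g}=\gamma$. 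Hence $w_{\gamma,q}\in\mathcal{H}_{q}$.

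Next I would show that $I$ attains its infimum at $w_{\gamma,q}$. The functional $I$ is quadratic, and by coercivity it defines a norm equivalent to $\|\cdot\|_{H_{k,0}^{2}(M)}$ on $H_{k,0}^{2}(M)$; since every norm on a Hilbert space is weakly lower semicontinuous, one gets $I(w_{\gamma,q})\leq \liminf_{m\to\infty} I(w_{m}) = \mu_{\gamma,q}$, so $w_{\gamma,q}$ is a minimizer.

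To extract the Euler–Lagrange equation, I would apply the Lagrange multiplier theorem. Define $G(w) = \int_{M}f|w+h|^{q}dv_{g}-\gamma$; since $q>2$, $G$ is $C^{1}$ on $H_{k,0}^{2}(M)$ with derivative $\langle G'(w),\varphi\rangle = q\int_{M}f|w+h|^{q-2}(w+h)\varphi\,dv_{g}$. The condition $\gamma>\int_{M}f|h|^{2^{\sharp}}dv_{g}\geq 0$ (together with the sign constraint $f>0$ in the setting of Theorem~\ref{th1}) guarantees that $w_{\gamma,q}+h\not\equiv 0$, so $G'(w_{\gamma,q})\neq 0$ and the constraint is non-degenerate at the minimizer. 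There therefore exists $\lambda_{\gamma,q}\in\mathbb{R}$ such that, for all $\varphi\in H_{k,0}^{2}(M)$,
\[
\int_{M}\varphi P_{g}w_{\gamma,q}\,dv_{g}=\lambda_{\gamma,q}\int_{M}f|w_{\gamma,q}+h|^{q-2}(w_{\gamma,q}+h)\varphi\,dv_{g},
\]
i.e.\ $w_{\gamma,q}$ is a weak solution of $(\ref{eq7})$. Testing with $\varphi=w_{\gamma,q}$ and using the strict inequality $(\ref{eq8})$ together with the positivity of $I$ allows one to conclude $\lambda_{\gamma,q}>0$.

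Finally, I would upgrade $w_{\gamma,q}$ from weak to classical. The right-hand side belongs to $L^{q/(q-1)}(M)$, and since $P_{g}$ is a uniformly elliptic $2k$-th order operator with smooth coefficients and smooth Dirichlet boundary data (zero up to order $k-1$), the standard $L^{p}$ elliptic regularity for polyharmonic operators (e.g.\ Gazzola–Grunau–Sweers \cite{Gazzola-Grunau-Sweers}) applied iteratively in a bootstrap yields $w_{\gamma,q}\in C^{2k,\alpha}(M)$, and hence $w_{\gamma,q}\in C^{\infty}(M)$ since $q$ is subcritical and no blow-up phenomenon occurs. The main subtlety in the argument is ensuring that the Lagrange multiplier is non-trivial and that the bootstrap does not stop at a low Sobolev exponent; both are guaranteed precisely by the subcritical nature $q<2^{\sharp}$ and by hypothesis $(\ref{eq8})$, which prevents $w_{\gamma,q}$ from coinciding with the degenerate configuration $-h$.
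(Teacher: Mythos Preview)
Your proposal is correct and follows essentially the same approach as the paper: coercivity gives boundedness of a minimizing sequence, the compact subcritical embedding preserves the constraint, weak lower semicontinuity of $I$ yields a minimizer, and a Lagrange multiplier plus elliptic bootstrap produces a $C^{2k,\alpha}$ solution of $(\ref{eq7})$. The only cosmetic difference is that you obtain the weak lower semicontinuity of $I$ in one stroke (as an equivalent Hilbert norm), whereas the paper splits it into the top-order term $\|\Delta^{k/2}\cdot\|_{2}^{2}$ (weakly l.s.c.) and the lower-order $A_{l}$-terms (handled via strong convergence in $H_{k-1}^{2}$).
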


\begin{proof}
Since $P_{g}$ is coercive, there exists $C>0$ such that for all $u\in
H_{k,0}^{2}\left(  M\right)  $%
\begin{equation}
I\left(  u\right)  =\int\nolimits_{M}uP_{g}udv_{g}\geq C\left\Vert
u\right\Vert _{H_{k,0}^{2}} \label{eq9}%
\end{equation}
then $\mu_{\gamma,q}>0.$

Let $\left(  w_{i}\right)  _{i}$ be a minimizing sequence of the functional
$I$ on $\mathcal{H}_{q}$. By definition of $\mu_{\gamma,q}$ and from $\left(
\ref{eq9}\right)  $ for $i$ sufficiently large, we get%
\begin{equation}
\left\Vert w_{i}\right\Vert _{H_{k,0}^{2}\left(  M\right)  }\leq\frac{1}%
{C}\left(  \mu_{\gamma,q}+1\right)  \text{.} \label{eq10}%
\end{equation}

Hence, the sequence $\left(  w_{i}\right)  $ is bounded in $H_{k,0}^{2}\left(
M\right)  $. By reflexivity of this latter space, there exists a subsequence
of $\left(  w_{i}\right)  _{i}$ still labelled $\left(  w_{i}\right)  _{i}$
such that \newline%
\[%
\begin{array}
[c]{ll}%
\left(  a\right)  & w_{i}\rightharpoonup w_{\gamma,q}\text{ weakly in }%
H_{k,0}^{2}\left(  M\right)  \medskip\\
\left(  b\right)  & w_{i}\rightarrow w_{\gamma,q}\text{ strongly in }%
H_{k-1,0}^{2}\left(  M\right)  \text{ and in }L^{s}\left(  M\right)  \text{
for }s<2^{\sharp}\medskip\\
\left(  c\right)  & \left\Vert w_{\gamma,q}\right\Vert _{H_{k,0}^{2}}\leq
\lim\limits_{i}\inf\left\Vert w_{i}\right\Vert _{H_{k,0}^{2}}.
\end{array}
\]
\newline Consequently%
\[%
\begin{array}
[c]{ll}%
I\left(  w_{\gamma,q}\right)  & =%
{\displaystyle\int\nolimits_{M}}
\left(  \Delta^{\frac{k}{2}}w_{\gamma,q}\right)  ^{2}dv_{g}+\sum
\limits_{l=0}^{k-1}%
{\displaystyle\int_{M}}
A_{l}\left(  \nabla^{l}w_{\gamma,q},\nabla^{l}w_{\gamma,q}\right)
dv_{g}.\medskip\\
& \leq\lim_{i}\inf\left\Vert \Delta^{\frac{k}{2}}w_{i}\right\Vert _{2}%
^{2}+\lim_{i}\sum\limits_{l=0}^{k-1}%
{\displaystyle\int_{M}}
A_{l}\left(  \nabla^{l}w_{i},\nabla^{l}w_{i}\right)  dv_{g}.\medskip\\
& =\lim_{i}I\left(  w_{i}\right)  =\mu_{\gamma,q}.
\end{array}
\]
\newline Since%
\[
\int\nolimits_{M}f\left\vert w_{\gamma,q}+h\right\vert ^{q}dv_{g}%
=\lim\limits_{i}\int\nolimits_{M}f\left\vert w_{i}+h\right\vert ^{q}%
dv_{g}=\gamma
\]
we obtain%
\[
I\left(  w_{\gamma,q}\right)  =\mu_{\gamma,q}%
\]
If we write the Euler-Lagrange equation for $w_{\gamma,q}$, we find that
$w_{\gamma,q}$ is a weak solution of the equation%
\begin{equation}
\left\{
\begin{array}
[c]{ll}%
\Delta_{g}^{k}w_{\gamma,q}+\sum\limits_{l=0}^{k-1}\left(  -1\right)
^{l}\nabla^{j_{l}...j_{1}}\left(  A_{\left(  l\right)  i_{1}...i_{l}%
j_{1}...j_{l}}\nabla^{i_{1}...i_{l}}w_{\gamma,q}\right)  =\lambda_{\gamma
,q}f\left\vert w_{\gamma,q}+h\right\vert ^{q-2}\left(  w_{\gamma,q}+h\right)
& \text{in }M\\
w_{\gamma,q}=\frac{\partial}{\partial\nu}w_{\gamma,q}=...=\frac{\partial
^{k-1}}{\partial\nu^{k-1}}w_{\gamma,q}=0 & \text{on }\partial M
\end{array}
\right.  \text{.} \label{eq11}%
\end{equation}
where $\lambda_{\gamma,q}$ is the Lagrange multiplier

We easily get from standard boostrap arguments, that $w_{\gamma,q}\in
C^{2k,\alpha}\left(  M\right)  $ for some $\alpha\in\left(  0,1\right)  $.
\end{proof}

\section{Critical solutions and geometric conditions}

In this section, we will study the behavior of the minimizing sequence
$\left(  w_{\gamma,q}\right)  _{q}$ when $q$ tends to $2^{\sharp}$.

\begin{proposition}
Under the hypothesis $\int\nolimits_{M}f\left\vert h\right\vert ^{2^{\sharp}%
}dv_{g}<\gamma$, the sequence $\left(  w_{\gamma,q}\right)  _{q}$ is bounded
in $H_{k,0}^{2}\left(  M\right)  $. The Lagrange multiplier $\lambda
_{\gamma,q}$ is strictly positive and the sequence $(\lambda_{\gamma,q})_{q}$
is bounded when $q$ tends to $2^{\sharp}$.
\end{proposition}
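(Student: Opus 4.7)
The plan is to establish both assertions by first obtaining a uniform upper bound on $\mu_{\gamma,q}$ as $q \to 2^{\sharp}$, which through coercivity yields the boundedness of $(w_{\gamma,q})_q$ in $H_{k,0}^{2}(M)$, and then by testing the Euler--Lagrange equation \eqref{eq11} against $w_{\gamma,q}$ and splitting $w_{\gamma,q}=(w_{\gamma,q}+h)-h$ to exhibit $\lambda_{\gamma,q}$ as a ratio whose denominator is controlled by H\"older's inequality together with the strict hypothesis \eqref{eq8}.

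For the first point, I would use the test function $t_q\psi_1 \in \mathcal{H}_q$, where $\psi_1$ is the first Navier eigenfunction of $\Delta_g^{k}$ introduced previously, and $t_q>0$ is the intermediate-value solution of $\int_M f|t_q\psi_1+h|^q\,dv_g=\gamma$. Such $t_q$ exists for $q$ close enough to $2^{\sharp}$ because $\int_M f|h|^q\,dv_g \to \int_M f|h|^{2^{\sharp}}\,dv_g < \gamma$ by \eqref{eq8}, and it is uniformly bounded in $q$: otherwise the leading $t_q^{q}$ growth of the constraint integral (on the set $\{\psi_1\neq 0\}$, of full measure) would force it to exceed $\gamma$. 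Therefore
\[
0<\mu_{\gamma,q}\leq I(t_q\psi_1)=t_q^{2}\, I(\psi_1)\leq C,
\]
and the coercivity bound \eqref{eq9} delivers $\|w_{\gamma,q}\|_{H_{k,0}^{2}}\leq C'$ uniformly.

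For the second point, testing \eqref{eq11} against $w_{\gamma,q}\in H_{k,0}^{2}(M)$ and decomposing $w_{\gamma,q}=(w_{\gamma,q}+h)-h$ gives
\[
\mu_{\gamma,q}=I(w_{\gamma,q})=\lambda_{\gamma,q}\left(\gamma-\int_M f|w_{\gamma,q}+h|^{q-2}(w_{\gamma,q}+h)\,h\,dv_g\right).
\]
H\"older's inequality bounds the last integral by $\gamma^{(q-1)/q}\bigl(\int_M f|h|^q\,dv_g\bigr)^{1/q}$, whose limit as $q\to 2^{\sharp}$ equals $\gamma^{(2^{\sharp}-1)/2^{\sharp}}\bigl(\int_M f|h|^{2^{\sharp}}\,dv_g\bigr)^{1/2^{\sharp}}$, strictly smaller than $\gamma$ by \eqref{eq8}. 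Hence the parenthesis is bounded below by a positive constant for $q$ close to $2^{\sharp}$; combined with $0<\mu_{\gamma,q}\leq C$ this yields $\lambda_{\gamma,q}>0$ together with the desired uniform upper bound.

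The main delicacy I anticipate is the \emph{sharp} use of the hypothesis \eqref{eq8} at the H\"older step: the strict inequality $\int_M f|h|^{2^{\sharp}}\,dv_g<\gamma$ must survive the passage $q\to 2^{\sharp}$, which is guaranteed by the continuity of $q\mapsto \int_M f|h|^{q}\,dv_g$ on a compact neighbourhood of $2^{\sharp}$. The other minor point to monitor is the measurable selection of $t_q$, handled by the intermediate value theorem applied to the continuous map $t\mapsto \int_M f|t\psi_1+h|^{q}\,dv_g$.
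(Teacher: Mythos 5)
Your proposal is correct and follows essentially the same route as the paper: the test function $t_q\psi_1\in\mathcal{H}_q$ plus coercivity for the $H^2_{k,0}$-bound, and the identity $\mu_{\gamma,q}=\lambda_{\gamma,q}\bigl(\gamma-\int_M f|w_{\gamma,q}+h|^{q-2}(w_{\gamma,q}+h)h\,dv_g\bigr)$ with the H\"older estimate $\int_M f|w_{\gamma,q}+h|^{q-2}(w_{\gamma,q}+h)h\,dv_g\le\gamma^{1-1/q}\bigl(\int_M f|h|^q dv_g\bigr)^{1/q}<\gamma$ for the sign and boundedness of $\lambda_{\gamma,q}$. The one genuine divergence is the uniform bound on $t_q$: the paper shows $\partial F/\partial t(t_{\gamma,q},q)\neq 0$ and invokes the implicit function theorem to get continuity of $q\mapsto t_{\gamma,q}$, whereas you argue directly that $t_q\to\infty$ would force the constraint integral past $\gamma$; your version is more elementary and, unlike the paper's, does not leave open the question of uniformity as $q$ approaches the endpoint $2^\sharp$ (you only need $|\{|\psi_1|\ge c\}|>0$ for some $c>0$, not that $\{\psi_1\neq 0\}$ has full measure). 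Your derivation of strict positivity of $\lambda_{\gamma,q}$ directly from $\mu_{\gamma,q}>0$ divided by a positive denominator also streamlines the paper's two-step argument (first $\lambda_{\gamma,q}\ge 0$, then excluding $\lambda_{\gamma,q}=0$).
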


\begin{proof}
The proof of this proposition is the same as in \cite{Bekiri}. Let us first
notice that the condition $\int\nolimits_{M}f\left\vert h\right\vert
^{2^{\sharp}}dv_{g}<\gamma$ remains valid for $q$ close to $2^{\sharp}.$ Now,
multiplying equation $\left(  \ref{eq11}\right)  $ by $w_{\gamma,q}$ and
integrating on $M$, we get
\[%
\begin{array}
[c]{ll}%
0\leq\mu_{\gamma,q} & =\int\nolimits_{M}w_{\gamma,q}P_{g}w_{\gamma,q}%
dv_{g}\medskip\\
& =\lambda_{\gamma,q}\int\nolimits_{M}f\left\vert w_{\gamma,q}+h\right\vert
^{q-2}\left(  w_{\gamma,q}+h\right)  w_{\gamma,q}dv_{g}\medskip\\
& =\lambda_{\gamma,q}\int\nolimits_{M}f\left\vert w_{\gamma,q}+h\right\vert
^{q-2}\left(  w_{\gamma,q}+h\right)  \left(  w_{\gamma,q}+h-h\right)
dv_{g}\medskip\\
& =\lambda_{\gamma,q}\left(  \gamma-\int\nolimits_{M}f\left\vert w_{\gamma
,q}+h\right\vert ^{q-2}\left(  w_{\gamma,q}+h\right)  hdv_{g}\right)  \text{.}%
\end{array}
\]
Thanks to H\"{o}lder's inequality, we have%
\[
\int\nolimits_{M}f\left\vert w_{\gamma,q}+h\right\vert ^{q-2}\left(
w_{\gamma,q}+h\right)  hdv_{g}\leq\gamma^{1-\frac{1}{q}}\left(  \int
\nolimits_{M}f\left\vert h\right\vert ^{q}\right)  ^{\frac{1}{q}}<\gamma
\]
and therefore, we obtain $\lambda_{\gamma,q}\geq0$.

Then, if $\lambda_{\gamma,q}=0$ we get $w_{\gamma,q}$ $=0$. A contradiction
with the fact that $w_{\gamma,q}\in\mathcal{H}_{q}$ and so $\int
\nolimits_{M}f\left\vert h\right\vert ^{q}dv_{g}<\gamma$.

We show that the sequence $\left(  w_{\gamma,q}\right)  _{q}$ is bounded in
$H_{k,0}^{2}\left(  M\right)  .$To do this, let $\psi_{1}$ be an eigenfunction
of $\Delta_{g}^{k}$ corresponding to the eigenvalue $\lambda_{1}$ such that%
\[
\left\{
\begin{array}
[c]{ll}%
\Delta_{g}^{k}\psi_{1}=\lambda_{1}\psi_{1} & \text{in }M\\
\psi_{1}=\frac{\partial}{\partial\nu}\psi_{1}=...\frac{\partial^{k-1}%
}{\partial\nu^{k-1}}\psi_{1}=0 & \text{on }\partial M\\
\int\nolimits_{M}\psi_{1}^{2}dv_{g}=1 &
\end{array}
\right.  .
\]
\newline Put%
\[
F\left(  t,q\right)  =\int\nolimits_{M}f\left\vert t\psi_{1}+h\right\vert
^{q}dv_{g}\text{.}%
\]
We have $t_{\gamma,q}\psi_{1}\in\mathcal{H}_{q}$ that is to say%
\[
F\left(  t_{\gamma,q},q\right)  =\int\nolimits_{M}f\left\vert t_{\gamma,q}%
\psi_{1}+h\right\vert ^{q}dv_{g}=\gamma.
\]
In the following we will show that $\frac{\partial F}{\partial t}\left(
t_{\gamma,q},q\right)  \neq0$. By contradiction, assume that $\frac{\partial
F}{\partial t}\left(  t_{\gamma,q},q\right)  =0.$

We remark that%
\[%
\begin{array}
[c]{ll}%
t_{\gamma,q}\frac{\partial F}{\partial t}\left(  t_{\gamma,q},q\right)  &
=qt_{\gamma,q}\int\nolimits_{M}f\left\vert t_{\gamma,q}\psi_{1}+h\right\vert
^{q-2}\left(  t_{\gamma,q}\psi_{1}+h\right)  \psi_{1}dv_{g}\medskip\\
& =q\int\nolimits_{M}f\left\vert t_{\gamma,q}\psi_{1}+h\right\vert
^{q-2}\left(  t_{\gamma,q}\psi_{1}+h\right)  \left(  t_{\gamma,q}\psi
_{1}+h-h\right)  dv_{g}\medskip\\
& =q\left(  \gamma-\int\nolimits_{M}f\left\vert t_{\gamma,q}\psi
_{1}+h\right\vert ^{q-2}\left(  t_{\gamma,q}\psi_{1}+h\right)  hdv_{g}\right)
=0
\end{array}
\]
hence
\begin{equation}
\gamma=\int\nolimits_{M}f\left\vert t_{\gamma,q}\psi_{1}+h\right\vert
^{q-2}\left(  t_{\gamma,q}\psi_{1}+h\right)  hdv_{g}\text{.} \label{eq12}%
\end{equation}
Now thanks to H\"{o}lder's inequality and the assumption $\int\nolimits_{M}%
f\left\vert h\right\vert ^{q}dv_{g}<\gamma,$ we infer that
\[
\int\nolimits_{M}f\left\vert t_{\gamma,q}\psi_{1}+h\right\vert ^{q-2}\left(
t_{\gamma,q}\psi_{1}+h\right)  hdv_{g}\leq\left(  \int\nolimits_{M}f\left\vert
t_{\gamma,q}\psi_{1}+h\right\vert ^{q}dv_{g}\right)  ^{1-\frac{1}{q}}\left(
\int\nolimits_{M}f\left\vert h\right\vert ^{q}dv_{g}\right)  ^{\frac{1}{q}%
}<\gamma\text{.}%
\]
which contradicts $\left(  \ref{eq12}\right)  $.

Since $\frac{\partial F}{\partial t}\left(  t_{\gamma,q},q\right)  \neq0$, the
implicit function theorem shows that $t_{\gamma,q}$ is a continuous function
of $q$. Consequently there exists a constant $C\left(  \gamma\right)  $
independent of $q$ such that%
\begin{equation}
\int\nolimits_{M}w_{\gamma,q}P_{g}w_{\gamma,q}dv_{g}\leq I\left(  t_{\gamma
,q}\psi_{1}\right)  =t_{\gamma,q}^{2}I\left(  \psi_{1}\right)  \leq C\left(
\gamma\right)  \text{.}\label{eq13}%
\end{equation}
From the coercivity of $P_{g}$, the sequence $\left(  w_{\gamma,q}\right)
_{q}$ is bounded in $H_{k,0}^{2}\left(  M\right)  $ when $q$ goes to
$2^{\sharp}$.

So, we can extract a subsequence of $\left(  w_{\gamma,q}\right)  _{q}$, still
denoted $\left(  w_{\gamma,q}\right)  _{q}$, such that%
\[%
\begin{array}
[c]{ll}%
\left(  a\right)  & w_{\gamma,q}\rightharpoonup w\text{ weakly in }H_{k,0}%
^{2}\left(  M\right)  \text{ as }q\rightarrow2^{\sharp}\medskip\\
\left(  b\right)  & w_{\gamma,q}\rightarrow w\text{ strongly in }H_{k-1}%
^{2}\left(  M\right)  \text{ and }L^{s}\left(  M\right)  \text{ for all
}s<2^{\sharp}\text{ as }q\rightarrow2^{\sharp}\medskip\\
\left(  c\right)  & w_{\gamma,q}\rightarrow w\text{ a.e in }M\text{ as
}q\rightarrow2^{\sharp}\text{.}%
\end{array}
\]
\newline Now, we will show that the sequence $\left(  \lambda_{\gamma
,q}\right)  _{q}$ is bounded for $q$ close to $2^{\sharp}$.

By the definition of $\lambda_{\gamma,q}$ and from the formula $\left(
\text{\ref{eq13}}\right)  $ and the fact that
\[
\int\nolimits_{M}f\left\vert w_{\gamma,q}+h\right\vert ^{q-2}\left(
w_{\gamma,q}+h\right)  hdv_{g}\leq\gamma^{1-\frac{1}{q}}\left(  \int
\nolimits_{M}f\left\vert h\right\vert ^{q}dv_{g}\right)  ^{\frac{1}{q}}<\gamma
\]
we obtain%
\[%
\begin{array}
[t]{ll}%
0<\lambda_{\gamma,q} & =\dfrac{\int\nolimits_{M}w_{\gamma,q}P_{g}w_{\gamma
,q}dv_{g}}{\int\nolimits_{M}f\left\vert w_{\gamma,q}+h\right\vert
^{q-2}\left(  w_{\gamma,q}+h\right)  w_{\gamma,q}dv_{g}}\medskip\\
& =\dfrac{\int\nolimits_{M}w_{\gamma,q}P_{g}w_{\gamma,q}dv_{g}}{\gamma
-\int\nolimits_{M}f\left\vert w_{\gamma,q}+h\right\vert ^{q-2}\left(
w_{\gamma,q}+h\right)  hdv_{g}}\medskip\\
& \leq\dfrac{I\left(  t_{\gamma,q}\psi_{1}\right)  }{\gamma-\gamma^{1-\frac
{1}{q}}\left(  \int\nolimits_{M}f\left\vert h\right\vert ^{q}dv_{g}\right)
^{\frac{1}{q}}}<C\left(  \gamma,h\right)  .
\end{array}
\]
Consequently, there exists a subsequence of $\left(  \lambda_{\gamma
,q}\right)  _{q},$still labelled $\left(  \lambda_{\gamma,q}\right)  _{q},$
which converges to $\lambda.$

Passing to the limit in equation (\ref{eq11}), we conclude that $w$ the weak
limit of the sequence ($w_{\gamma,q}$)$_{q}$ is a weak solution of the
critical equation (\ref{eq6}), so $u=w+h$ is a weak solution of the equation
(\ref{eq4}).

The regularity results obtained by Mazumdar \cite{Saikat Mazumdar} ( the proof
is based on ideas developed by Van der V\"{o}rst \cite{Van der vorst} and also
employed by Djadli-Hebey-Ledoux \cite{Djadli-Hebey-Ledoux} and Esposito-Robert
\cite{Esposito-Robert} for the case $k=2$) remain valid in our case and
consequently we conclude that our weak solution $w$ of the equation $\left(
\text{\ref{eq6}}\right)  $ is of class $C^{2k,\alpha}(M)$, for some $\alpha
\in\left(  0,1\right)  $.
\end{proof}

\begin{remark}
\label{rem1} If the boundary data $\left(  \phi_{1},\phi_{2},...,\phi
_{k-1}\right)  \not \equiv \left(  0,0,...,0\right)  $, then $u\not \equiv 0$
is a non trivial solution of the equation $\left(  \text{\ref{eq4}}\right)  $.
But if $\left(  \phi_{1},\phi_{2},...\phi_{k-1}\right)  =\left(
0,0,...,0\right)  $, then $h=0$ i.e. $u=w$. In this case, we will prove under
additional condition that $w$ is non trivial solution of the equation%
\begin{equation}
\left\{
\begin{array}
[c]{ll}%
P_{g}w=\lambda f\left\vert w\right\vert ^{2^{\sharp}-2}w & \text{in }M\\
w=\frac{\partial w}{\partial\nu}=...\frac{\partial^{k-1}w}{\partial\nu^{k-1}%
}=0 & \text{on }\partial M
\end{array}
\right.  .\label{eq14}%
\end{equation}
(\ref{eq14})if the condition $\left(  \text{\ref{eq15}}\right)  $ holds.
\end{remark}

\begin{proposition}
\label{prop5} Suppose that the minimizing sequence $\left(  w_{\gamma
,q}\right)  _{q}$ converges weakly to $w$. Put $\mu_{2^{\sharp}}%
=\lim\limits_{q\rightarrow2^{\sharp}}\mu_{\gamma,q}$. Assume that%
\begin{equation}
\frac{\mu_{2^{\sharp}}}{\gamma^{\frac{2}{2^{\sharp}}}}<\frac{1}{K_{0}\left(
f\left(  x_{0}\right)  \right)  ^{\frac{2}{2^{\sharp}}}} \label{eq15}%
\end{equation}
where $f(x_{0})=\max_{M}f(x)$, then $w$ is non trivial solution of the
equation $\left(  \text{\ref{eq14}}\right)  $.
\end{proposition}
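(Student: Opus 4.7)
My plan is to argue by contradiction: assume the weak limit $w \equiv 0$ and derive a lower bound on $\mu_{2^\sharp}/\gamma^{2/2^\sharp}$ that violates the strict inequality $(\ref{eq15})$. Since this proposition lives in the setting $h \equiv 0$ (see Remark \ref{rem1}), the constraint simplifies to $\int_M f|w_{\gamma,q}|^q\,dv_g = \gamma$, and strong $L^s$ convergence of $w_{\gamma,q}\to 0$ only holds for $s<2^\sharp$, so the $L^{2^\sharp}$ mass cannot vanish in the limit.

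The first step is a lower bound for $\|w_{\gamma,q}\|_{2^\sharp}$ in the limit $q \to 2^\sharp$. Using $f\leq f(x_0)$ and H\"older's inequality with conjugate exponents $2^\sharp/q$ and $2^\sharp/(2^\sharp-q)$,
\[
\gamma = \int_M f|w_{\gamma,q}|^q\,dv_g \leq f(x_0)\,\|w_{\gamma,q}\|_{2^\sharp}^q\,\operatorname{Vol}_g(M)^{1-q/2^\sharp},
\]
and since $\operatorname{Vol}_g(M)^{1-q/2^\sharp}\to 1$, I obtain $\liminf_{q\to 2^\sharp}\|w_{\gamma,q}\|_{2^\sharp}^2 \geq \bigl(\gamma/f(x_0)\bigr)^{2/2^\sharp}$. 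The second step absorbs all lower-order contributions. Since $w\equiv 0$, the strong convergence (b) gives $w_{\gamma,q}\to 0$ in $H_{k-1,0}^2(M)$; as each smooth tensor $A_{(l)}$ is contracted with derivatives of order $l\leq k-1$, it follows that $\sum_{l=0}^{k-1}\int_M A_l(\nabla^l w_{\gamma,q},\nabla^l w_{\gamma,q})\,dv_g \to 0$ and $B_\epsilon\|w_{\gamma,q}\|_{H_{k-1}^2}^2\to 0$ for every fixed $\epsilon>0$. In particular $\mu_{\gamma,q}=\|\Delta_g^{k/2} w_{\gamma,q}\|_2^2+o(1)$.

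The third step invokes Mazumdar's inequality (Lemma \ref{lem1}): for every $\epsilon>0$,
\[
\|w_{\gamma,q}\|_{2^\sharp}^2 \leq (K_0(n,k)+\epsilon)\|\Delta_g^{k/2} w_{\gamma,q}\|_2^2 + B_\epsilon\|w_{\gamma,q}\|_{H_{k-1}^2}^2.
\]
Passing to the $\liminf$ as $q\to 2^\sharp$ and combining with the two previous observations gives
\[
\bigl(\gamma/f(x_0)\bigr)^{2/2^\sharp} \leq (K_0(n,k)+\epsilon)\,\mu_{2^\sharp}.
\]
Sending $\epsilon\to 0$ and rearranging yields $\mu_{2^\sharp}/\gamma^{2/2^\sharp}\geq 1/\bigl(K_0(n,k)\,f(x_0)^{2/2^\sharp}\bigr)$, contradicting $(\ref{eq15})$. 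Hence $w\not\equiv 0$; since $w$ is already the weak limit of the $w_{\gamma,q}$ and satisfies the Euler--Lagrange equation of the critical problem, the regularity argument recalled at the end of the preceding proposition promotes $w$ to a classical non-trivial solution of $(\ref{eq14})$.

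The main obstacle is the simultaneous passage $q\to 2^\sharp$, $\epsilon\to 0$, while absorbing the H\"older volume factor $\operatorname{Vol}_g(M)^{1-q/2^\sharp}$ and all subcritical remainders, without any slack: the argument recovers \emph{exactly} the constant $K_0(n,k)\,f(x_0)^{2/2^\sharp}$ appearing in $(\ref{eq15})$, so the strictness of $(\ref{eq15})$ is precisely what forces the weak limit to be non-trivial and prevents mass from concentrating away into bubbles at a point of maximum of $f$.
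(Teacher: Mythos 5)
Your proposal is correct and follows essentially the same route as the paper: contradiction via $w\equiv 0$, H\"older's inequality with the bound $f\leq f(x_0)$ to keep the $L^{2^\sharp}$ norm bounded below by the constraint, Mazumdar's Sobolev inequality (Lemma \ref{lem1}) to control that norm by $\|\Delta_g^{k/2}w_{\gamma,q}\|_2^2$, and the strong $H_{k-1}^2$ convergence to kill the lower-order terms, recovering exactly the threshold constant that contradicts $(\ref{eq15})$. (Your final inequality correctly reads $\geq$ where the paper writes a strict $>$, which is immaterial since $(\ref{eq15})$ is strict.)
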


\begin{proof}
We argue by contradiction. We may assume that $w\equiv0$, that is to say the
sequence $\left(  w_{\gamma,q}\right)  _{q}$ converges weakly to $0$ in
$H_{k,0}^{2}\left(  M\right)  $. The space $H_{k,0}^{2}\left(  M\right)  $ is
compactly embedded in $H_{k-1}^{2}\left(  M\right)  $. Therefore the sequence
$\left(  w_{\gamma,q}\right)  _{q}$ converges strongly to $0$ in $H_{k-1}%
^{2}\left(  M\right)  $ as $q\longrightarrow0$.

So, we have%
\[
\gamma^{\frac{2}{q}}=\left(  \int_{M}f\left\vert w_{\gamma,q}\right\vert
^{q}\right)  ^{\frac{2}{q}}\leq\left(  f(x_{0})\right)  ^{\frac{2}{q}}%
Vol_{g}\left(  M\right)  ^{\frac{2}{q}-\frac{2}{2^{\sharp}}}\left(  \int
_{M}\left\vert w_{\gamma,q}\right\vert ^{2^{\sharp}}\right)  ^{\frac
{2}{2^{\sharp}}}.
\]
We deduce from the Sobolev inequality $\left(  \text{\ref{eq3}}\right)  $ that
for any $\epsilon>0,$ there exists $B_{\epsilon}>0$ such that%
\[
\gamma^{\frac{2}{q}}\leq\left(  f(x_{0})\right)  ^{\frac{2}{q}}Vol_{g}\left(
M\right)  ^{\frac{2}{q}-\frac{2}{2^{\sharp}}}\left(  \left(  K_{0}%
+\epsilon\right)  \left\Vert \Delta^{\frac{k}{2}}w_{\gamma,q}\right\Vert
_{2}^{2}+B_{\epsilon}\left\Vert w_{\gamma,q}\right\Vert _{H_{k-1}^{2}}%
^{2}\right)  \medskip
\]
Since%
\[
\left\Vert \Delta^{\frac{k}{2}}w_{\gamma,q}\right\Vert _{2}^{2}=\mu_{\gamma
,q}-\sum\limits_{l=0}^{k-1}%
{\displaystyle\int_{M}}
A_{l}\left(  \nabla^{l}w_{\gamma,q},\nabla^{l}w_{\gamma,q}\right)  dv_{g}%
\]
it follows that%
\begin{equation}
\gamma^{\frac{2}{q}}\leq\left(  f(x_{0})\right)  ^{-\frac{2}{q}}Vol_{g}\left(
M\right)  ^{\frac{2}{2^{\sharp}}-\frac{2}{q}}\left\{
\begin{array}
[c]{c}%
\left(  K_{0}+\epsilon\right)  \left(  \mu_{\gamma,q}-\sum\limits_{l=0}^{k-1}%
{\displaystyle\int_{M}}
A_{l}\left(  \nabla^{l}w_{\gamma,q},\nabla^{l}w_{\gamma,q}\right)
dv_{g}\right) \\
+B_{\epsilon}\left\Vert w_{\gamma,q}\right\Vert _{H_{k-1}^{2}}^{2}%
\end{array}
\right\}  \text{.} \label{eq16}%
\end{equation}
Passing to the limit in (\ref{eq16}), choosing $\epsilon$ sufficiently small
and the fact that%

\[%
\begin{array}
[c]{l}%
\left\Vert w_{\gamma,q}\right\Vert _{H_{k-1}^{2}}^{2}=o\left(  1\right)
\medskip\\
Vol_{g}\left(  M\right)  ^{\frac{2}{q}-\frac{2}{2^{\sharp}}}=1+o\left(
1\right)  \medskip\\
\left\vert \sum\limits_{l=0}^{k-1}%
{\displaystyle\int_{M}}
A_{l}\left(  \nabla^{l}w_{\gamma,q},\nabla^{l}w_{\gamma,q}\right)
dv_{g}\right\vert \leq C\left\Vert w_{\gamma,q}\right\Vert _{H_{k-1}^{2}}%
^{2}=o\left(  1\right)  \medskip
\end{array}
\]

We infer that%
\[
\frac{\mu_{2^{\sharp}}}{\gamma^{\frac{2}{2^{\sharp}}}}>\frac{1}{\left(
f\left(  x_{0}\right)  \right)  ^{\frac{2}{2^{\sharp}}}K_{0}}\text{.}%
\]

A contradiction with the condition $\left(  \ref{eq15}\right)  $.

Hence $w\not \equiv 0$.
\end{proof}

\section{Estimates and test functions}

The purpose of this paragraph is to prove that the geometric condition
obtained in $\left(  \text{\ref{eq15}}\right)  $ is verified. The natural
strategy is to evaluate the quotient $Q=\frac{\mu_{2^{\sharp}}}{\gamma
^{\frac{2}{2^{\sharp}}}}$ at some good test functions where
\[%
\begin{array}
[c]{lll}%
\mu_{2^{\sharp}}=\lim_{q\rightarrow2^{\sharp}}\mu_{\gamma,q} & \text{and} &
\gamma=\int\nolimits_{M}f\left\vert u\right\vert ^{2^{\sharp}}dv_{g}%
\end{array}
.
\]

Let $\delta\in\left(  0,\frac{i_{g}\left(  M\right)  }{2}\right)  ,$ where
$i_{g}\left(  M\right)  $ is the injectivity radius and $x_{0}\in M$ such that
$f\left(  x_{0}\right)  =\max_{M}f.$ We let also $\eta\in C^{\infty}\left(
M\right)  $ be a cut-off function such that $\eta\equiv1$ in $B_{g}\left(
x_{0},\delta\right)  $ $\eta\equiv0$ in $M-B_{g}\left(  x_{0},2\delta\right)
$, where $B_{g}\left(  x_{0},\delta\right)  $ denotes the geodesic ball of
center $x_{0}$ and radius $\delta$. We define the following radial smooth
function%
\[
u_{\epsilon}\left(  x\right)  =\eta\left(  x\right)  \left(  \frac{\epsilon
}{\epsilon^{2}+r^{2}}\right)  ^{\frac{n-2k}{2}}=\eta\left(  x\right)
\epsilon^{-\frac{n-2k}{2}}u_{0}\left(  \frac{\exp_{x_{0}}^{-1}\left(
x\right)  }{\epsilon}\right)
\]
where $\exp_{x_{0}}$ is the exponential map at $x_{0}$ and $u_{0}$ is the
extremal function for the best Euclidean Sobolev inequality given by $\left(
\ref{eq2}\right)  $ and $r=d_{g}\left(  x_{0},x\right)  $ denotes the geodesic
distance to the point $x_{0}.$

We will now compute the estimates of
\[%
\begin{array}
[c]{lll}%
\mu_{2^{\sharp}}\left(  u_{\epsilon}\right)   & = & \int\limits_{M}\left(
\Delta_{g}^{\frac{k}{2}}u_{\epsilon}\right)  ^{2}dv_{g}+\sum\limits_{l=0}%
^{k-1}%
{\displaystyle\int_{M}}
A_{l}\left(  \nabla^{l}u_{\epsilon},\nabla^{l}u_{\epsilon}\right)  dv_{g}\\
\gamma\left(  u_{\epsilon}\right)   & = & \int\limits_{M}f\left\vert
u_{\epsilon}\right\vert ^{2^{\sharp}}dv_{g}%
\end{array}
\text{ and}%
\]
for $\epsilon$ sufficiently small.

With similar computations to \cite{Saikat Mazumdar} in the proof of
Proposition $5.1$, we get that%
\begin{equation}
\lim_{\epsilon\rightarrow0}\int\limits_{M}\left(  \Delta_{g}^{\frac{k}{2}%
}u_{\epsilon}\right)  ^{2}dv_{g}=\int\limits_{%
\mathbb{R}
^{n}}\left(  \Delta^{\frac{k}{2}}u_{0}\right)  ^{2}dx \label{eq17}%
\end{equation}
and
\begin{equation}
\lim_{\epsilon\rightarrow0}\left(  \sum\limits_{l=0}^{k-1}%
{\displaystyle\int_{M}}
A_{l}\left(  \nabla^{l}u_{\epsilon},\nabla^{l}u_{\epsilon}\right)
dv_{g}\right)  =0 \label{eq18}%
\end{equation}
and
\begin{equation}
\lim_{\epsilon\rightarrow0}\int\limits_{M}f\left\vert u_{\epsilon}\right\vert
^{2^{\sharp}}dv_{g}=f\left(  x_{0}\right)  \int\limits_{%
\mathbb{R}
^{n}}\left\vert u_{0}\right\vert ^{2^{\sharp}}dx \label{eq19}%
\end{equation}
where $u_{0}$ is the extremal function for the best Euclidean Sobolev
inequality given by $\left(  \text{\ref{eq2}}\right)  .$

Thanks to (\ref{eq17}), (\ref{eq18}) and (\ref{eq19}), we get that%
\[
\lim_{\epsilon\rightarrow0}Q\left(  u_{\epsilon}\right)  =\lim_{\epsilon
\rightarrow0}\frac{\mu_{2^{\sharp}}\left(  u_{\epsilon}\right)  }{\left(
\gamma\left(  u_{\epsilon}\right)  \right)  ^{\frac{2}{2^{\sharp}}}}=\frac
{1}{\left(  f\left(  x_{0}\right)  \right)  ^{\frac{2}{2^{\sharp}}}%
K_{0}\left(  n,k\right)  }%
\]
where%
\[
\frac{1}{K_{0}\left(  n,k\right)  }=\frac{\int\limits_{%
\mathbb{R}
^{n}}\left(  \Delta^{\frac{k}{2}}u_{0}\right)  ^{2}dx}{\left(  \int\limits_{%
\mathbb{R}
^{n}}\left\vert u_{0}\right\vert ^{2^{\sharp}}dx\right)  ^{\frac{2}{2^{\sharp
}}}}%
\]

\end{document}